\newtheorem{thm}{Theorem}[section] 
\newtheorem{prop}[thm]{Proposition}
\newtheorem{lem}[thm]{Lemma}
\theoremstyle{definition} 
\newtheorem{defn}[thm]{Definition}
\newtheorem{step}{Step}
\theoremstyle{remark}
\newtheorem*{ack}{Acknowledgments}
\date{\today}
\title{On the abundance theorem for \\ 
numerically trivial canonical divisors \\
in positive characteristic}
\author{Sho Ejiri}
\address{Department of Mathematics, Graduate School of Science, Osaka Metropolitan University, Osaka City, Osaka 558-8585, Japan}
\email{shoejiri.math@gmail.com}
\subjclass[2010]{Primary 14E30, Secondary 14J40}
\keywords{Positive characteristic, Abundance theorem}
\begin{document}
\maketitle
\begin{abstract}
In this paper, we prove the abundance theorem for numerically trivial canonical divisors on strongly $F$-regular varieties, assuming that the geometric generic fibers of the Albanese morphisms are strongly $F$-regular. 
\end{abstract}
\setcounter{tocdepth}{1}
%\tableofcontents
%
\markboth{SHO EJIRI}{The abundance theorem for numerically trivial canonical divisors in char p}
\section{Introduction} \label{section:intro}
The abundance conjecture predicts that if a variety is minimal then its canonical divisor is semi-ample. 
This conjecture is one of the most important problems in the minimal model theory. 
%The conjecture predicts that if an lc pair has nef log canonical divisor, then it is semi-ample. 
In characteristic zero, the conjecture has been verified 
when the variety has only canonical singularities and the canonical divisor is numerically trivial by Kawamata~\cite[Theorem~8.2]{Kaw85}. 
This has been generalized to klt pairs by Nakayama~\cite[V.4.8~Theorem]{Nak04}, 
and to lc pairs by Campana--Koziarz--P\u aun~\cite[Theorem~0.1]{CKP12}, Gongyo~\cite[Theorem~1.2]{Gon13} and Kawamata~\cite[Theorem~1]{Kaw13}. 
 
In this paper we prove the theorem below, which can be viewed as a positive characteristic analog of Nakayama's theorem mentioned above. 
\begin{thm} \label{thm:main}
Let $X$ be a normal projective variety over an algebraically closed field of characteristic $p>0$. 
Let $\Delta$ be an effective $\mathbb Q$-Weil divisor on $X$ such that $(X, \Delta)$ is strongly $F$-regular.  
Assume that $m(K_X +\Delta)$ is Cartier for an integer $m>0$ not divisible by $p$, and that $K_X+\Delta$ is numerically trivial.  
Let $\alpha:X\to A$ be the Albanese morphism of $X$ and let $X_{\overline\eta}$ denote the geometric generic fiber of $\alpha$ over its image. 
If $(X_{\overline\eta},\Delta|_{X_{\overline\eta}})$ is strongly $F$-regular, 
then $K_X+\Delta$ is $\mathbb Q$-linearly trivial. 
\end{thm}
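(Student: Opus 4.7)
The approach will mirror Nakayama's characteristic-zero argument via the Albanese morphism, with Hodge-theoretic inputs replaced by their $F$-regular counterparts in positive characteristic. Since $(X,\Delta)$ is strongly $F$-regular, it is KLT, so the Albanese morphism $\alpha:X\to A$ is well-defined. Setting $Z=\alpha(X)$, I would take the Stein factorization $\alpha=\tau\circ f$, with $f:X\to Y$ having connected fibers and $\tau:Y\to A$ finite onto $Z$. After a suitable birational modification preserving the hypotheses, I would reduce to the situation in which $Y$ is normal and $f$ is equidimensional on a big open set, so that $K_{X/Y}$ makes sense.

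The second step is to apply positivity of direct images. The strong $F$-regularity of $X_{\overline\eta}$, together with the conditions $p\nmid m$ and $m(K_X+\Delta)$ Cartier, places the family $f$ in the scope of the positivity theorems for pushforwards of log-pluricanonical sheaves in strongly $F$-regular families (work of Patakfalvi, Hacon--Patakfalvi, Patakfalvi--Schwede, and their refinements). Setting $\mathcal F:=\bigl(f_*\mathcal O_X(m(K_{X/Y}+\Delta))\bigr)^{**}$, these results give that $\mathcal F$ is nef (or at least weakly positive) on $Y$. Using the identity $K_X+\Delta=K_{X/Y}+\Delta+f^*K_Y$ together with $K_X+\Delta\equiv 0$, a determinant computation then forces $c_1(\mathcal F)$ to be numerically trivial, so $\mathcal F$ is numerically flat.

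To conclude, I would push $\mathcal F$ forward through the finite map $\tau$ to produce a numerically flat sheaf $\tau_*\mathcal F$ on the abelian variety $A$. By the Fourier--Mukai / generic-vanishing machinery available in positive characteristic for abelian varieties (in the spirit of Hacon--Patakfalvi), such a sheaf becomes trivial after pulling back along a suitable finite étale cover $A'\to A$. Taking the corresponding cover $X'\to X$ and chasing the resulting section yields a non-zero section of a multiple of $m(K_{X'}+\Delta')$, which by descent promotes numerical triviality of $K_X+\Delta$ to $\mathbb Q$-linear triviality on $X$.

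The main obstacle I anticipate lies in the second step, namely controlling the determinant and establishing numerical flatness of $\mathcal F$. In positive characteristic there is no Hodge decomposition available, and Frobenius-twist pathologies must be ruled out using strong $F$-regularity of the geometric generic fiber---this is precisely where the hypothesis on $X_{\overline\eta}$ is essential, and this is where the paper's technical core should lie.
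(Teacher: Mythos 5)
Your plan fails at its final step, and the failure is not repairable by the tools you invoke. Everything is reduced to the claim that a numerically flat sheaf on an abelian variety ``becomes trivial after pulling back along a suitable finite \'etale cover $A'\to A$.'' That claim is false, already for line bundles: every $\mathcal N\in\mathrm{Pic}^0(A)$ is numerically flat; by Serre--Lang, a connected finite \'etale cover of an abelian variety is, up to translation, an isogeny $\phi:A'\to A$; and $\phi^*\mathcal N\cong\mathcal O_{A'}$ if and only if $\mathcal N$ lies in the \emph{finite} group $\ker(\hat\phi)$, so only torsion line bundles can be killed this way --- while for every algebraically closed $k\neq\overline{\mathbb F_p}$ the group $\mathrm{Pic}^0(A)$ contains non-torsion elements. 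Now observe that the entire content of the theorem is exactly a torsion statement: one writes $K_X+\Delta\sim_{\mathbb Q}\alpha^*L$ with $L$ numerically trivial on $A$ (the paper does this via Kleiman's results), and must show that this particular $L$ is torsion. Your step three silently assumes a general principle that would make \emph{every} numerically trivial line bundle on $A$ torsion, which is false outside $\overline{\mathbb F_p}$. No Fourier--Mukai or generic-vanishing statement in characteristic $p$ (Hacon--Patakfalvi included) delivers such a trivialization; in characteristic $0$ the torsion-ness ultimately comes from Simpson-type theorems on cohomology jumping loci, which have no known char-$p$ analogue, and supplying a substitute is precisely the problem this paper solves.

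The paper's substitute is a Frobenius--pigeonhole argument, and that --- not the determinant control you flag as the main obstacle --- is the technical core. Also, it does not push forward pluricanonical sheaves: note that your $\mathcal F=\bigl(f_*\mathcal O_X(m(K_{X/Y}+\Delta))\bigr)^{**}$ has rank at most one, since $m(K_{X/Y}+\Delta)\sim_{\mathbb Q}f^*(m\tau^*L-mK_Y)$, so its numerical flatness encodes essentially nothing beyond $L\equiv 0$, which is a hypothesis. Instead the paper applies Viehweg's fiber-product trick to an auxiliary ample divisor $B$: on the $r$-fold product $Y=X\times_A\cdots\times_A X$ with $r=\mathrm{rank}\,\alpha_*\mathcal O_X(B)$, it produces an effective $f$-ample $C$ with $\det f_*\mathcal O_Y(C)\cong\mathcal O_A$, proves nefness of $f_*\mathcal O_Y(-i(K_Y+\Gamma)+C)$ using weak positivity for strongly $F$-regular fibrations together with surjectivity of Frobenius trace maps, and then uses the induced surjections onto ${F_A^e}^*f_*\mathcal O_Y(-i(K_Y+\Gamma)+C)$ to generate relations of the form $\mathcal N^{p^e}\otimes\mathcal O_A(c_irtL)\in\mathbb M^{(p^e)}$ inside a fixed finite set of algebraically trivial line bundles attached to the numerically flat bundle; iterating and applying the pigeonhole principle forces $\mathcal O_A(NrtL)\cong\mathcal O_A$ for some $N>0$. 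Two further defects in your reduction: pushing forward through the finite part $\tau$ of the Stein factorization does not preserve numerical flatness ($\tau_*\mathcal O_Y$ already has summands of negative degree), and the fact that one can dispense with the Stein factorization entirely --- $\alpha$ itself is surjective and flat with integral, strongly $F$-regular fibers --- is a nontrivial input (the cited theorems of Ejiri and Patakfalvi--Zhang), not something obtainable ``after a suitable birational modification,'' which would in any case destroy both strong $F$-regularity and the numerical triviality of $K_X+\Delta$.
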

Here, strong $F$-regularity is a class of singularities defined only in positive characteristic, which is closely related to klt singularities in characteristic zero. 
 
We give a brief explanation of the proof of Theorem~\ref{thm:main}. 
For simplicity, we suppose that $\Delta=0$ and $K_X$ is an algebraically trivial Cartier divisor. 
Then $K_X \sim \alpha^*L$ for an algebraically trivial Cartier divisor $L$ on $A$. 
Our goal is to show that $L\sim_{\mathbb Q} 0$. 
By the assumption on the geometric generic fiber of $\alpha$, 
we can apply \cite[Theorem~1.1]{Eji19w} and \cite[Theorems~4.1 and~9.1]{PZ19}, 
so we see that $\alpha$ is a flat surjective morphism whose every fiber is integral and strongly $F$-regular. 
Let $B$ be a sufficiently ample Cartier divisor on $X$. 
Let $r$ be the rank of $\alpha_*\mathcal O_X(B)$. 
We consider the $r$-th fiber product 
$
Y:=X\times_A \cdots \times_A X
$ 
of $X$ over $A$. Let $f:Y\to A$ be the natural projection. 
Applying the argument due to Viehweg~\cite[Proof of Theorem~6.22]{Vie95}, 
we get an effective $f$-ample Cartier divisor $C$ on $Y$ 
such that $\det (f_*\mathcal O_Y(C)) \cong \mathcal O_A$. 
Using \cite[Theorem~6.10~(2)]{Eji19d}, 
we see that $f_*\mathcal O_Y(C)$ is a numerically flat vector bundle. 
Then by Theorem~\ref{thm:homog}, we find unipotent vector bundles $\mathcal U_1, \ldots, \mathcal U_l$ and algebraically trivial line bundles $\mathcal N_1, \ldots, \mathcal N_l$ such that 
$$
f_*\mathcal O_Y(C) \cong \bigoplus_{i=1}^l \mathcal U_i \otimes \mathcal N_i. 
$$
Applying an argument similar to \cite[Proof of Theorem~3.2]{EZ18}, 
we get some $\mu\in\mathbb Z_{>0}$ such that for every 
$\mathcal M \in \mathbb M^{(\mu)}$, where 
%we get a $\nu\in\mathbb Z_{>0}$ such that 
%for some $\mu\in\mathbb Z_{>0}$ and every $\mathcal M \in \mathbb M^{(\mu)}$, where
$$
\mathbb M^{(\mu)} :=\left\{ \bigotimes_{i=1}^l \mathcal N_i^{n_i} 
	\middle| \textup{$0\le n_i \in \mathbb Z_{}$ with $\sum_{i=1}^l n_i =\mu $}
\right\}, 
$$
we have $\mathcal M \otimes \mathcal O_A(\nu L) \in \mathbb M^{(\mu)}$ 
for infinitely many $\nu \in \mathbb Z_{>0}$. 
Since $\mathbb M^{(\mu)}$ is a finite set, we conclude that $L\sim_{\mathbb Q}0$. 
\begin{ack}
The author is greatly indebted to Professor Hiromu Tanaka for pointing out some typos and for giving him a simple proof of Lemma~\ref{lem:num_triv}.
He wishes to express his thanks to Professors Osamu Fujino, Yoshinori Gongyo, Zsolt Patakfalvi, Shunsuke Takagi and Lei Zhang for valuable comments and helpful advice. 
He would like to thank Professor Kenta Sato for answering his question. 
He also would like to thank the referee for helpful comments. 
He was supported by JSPS KAKENHI Grant Number 18J00171.
\end{ack}

\section{Preliminaries} \label{section:preliminaries}
\subsection{Notation and conventions} \label{subsection:notation}
We work over an algebraically closed field $k$ of characteristic $p>0$. 
A {\it variety} is an integral separated $k$-scheme of finite type. 
Let $\varphi:S\to T$ be a morphism of schemes and let $U$ be a $T$-scheme. 
Let $S_{U}$ and $\varphi_{U}:S_{U}\to U$ denote 
the fiber product $S\times_{T}U$ of $S$ and $U$ 
over $T$ and its second projection, respectively. 
For an $\mathcal O_S$-module $\mathcal G$, 
its pullback to $S_{U}$ is denoted by $\mathcal G_{U}$. 
We use the same notation for a $\mathbb Q$-divisor
if its pullback is well-defined. 
Let $X$ be an $\mathbb F_p$-scheme. 
The Frobenius morphism of $X$ is denoted by $F_X:X\to X$. 
We denote the source of $F_X^e$ by $X^e$. 
Let $f:X\to Y$ be a morphism of $\mathbb F_p$-schemes. 
When we regard $X$ and $Y$ as the sources of $F_X^e$ and $F_Y^e$, respectively, 
$f$ is denoted by $f^{(e)}:X^e\to Y^e$. 
We define $e$-th relative Frobenius morphism of $f$ to be 
$F_{X/Y}^{(e)}:=(F_X^e, f^{(e)}):X^e \to X\times_Y Y^e = X_{Y^e}$.  
\subsection{Numerically flat vector bundles on abelian varieties} \label{subsection:num_flat}
We recall properties of numerically flat vector bundles on abelian varieties. 
\begin{defn} \label{defn:num_flat}
Let $\mathcal E$ be a vector bundle on a projective variety $V$. 
We say that $\mathcal E$ is \textit{numerically flat} if $\mathcal E$ and its dual $\mathcal E^{\vee}$ are nef. 
\end{defn}
One can easily check that the numerical flatness of $\mathcal E$ is equivalent to both $\mathcal E$ and $(\det(\mathcal E))^\vee$ are nef. 
\begin{defn}
Let $\mathcal E$ be a vector bundle on an abelian variety $A$. 
We say that $\mathcal E$ is \textit{homogeneous} if $t_a^*\mathcal E \cong \mathcal E$
for every closed point $a\in A$, where $t_a$ is the translation map of $A$ by $a$. 
\end{defn}
\begin{defn} \label{defn:unipotent}
Let $\mathcal E$ be a vector bundle on an abelian variety $A$. 
We say that $\mathcal E$ is \textit{unipotent} if $\mathcal E$ is an iterated extension of $\mathcal O_A$. 
\end{defn}
\begin{thm} \samepage \label{thm:homog}
Let $A$ be an abelian variety. 
Let $\mathcal E$ be a vector bundle on $A$. 
Then the following are equivalent: 
\begin{itemize}
\item[\rm (1)] $\mathcal E$ is numerically flat; 
\item[\rm (2)] $\mathcal E$ is an iterated extension of algebraically trivial line bundles; 
\item[\rm (3)] $\mathcal E$ is homogeneous vector bundle; 
\item[\rm (4)] $\mathcal E \cong \bigoplus_i \mathcal U_i \otimes \mathcal L_i$, where each $\mathcal L_i$ is an algebraically trivial line bundle and each $\mathcal U_i$ is a unipotent vector bundle. 
\end{itemize}
\end{thm}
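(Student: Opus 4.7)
The plan is to prove the cycle $(4)\Rightarrow(2)\Rightarrow(1)\Rightarrow(3)\Rightarrow(4)$, with three of the four arrows being either formal or classical and the substance concentrated in $(1)\Rightarrow(3)$. Concretely: $(4)\Rightarrow(2)$ is immediate from the definitions, since a unipotent bundle $\mathcal U_i$ tensored with an algebraically trivial line bundle $\mathcal L_i$ is an iterated extension of $\mathcal L_i$, and a direct sum of iterated extensions of algebraically trivial line bundles is again of this form. $(2)\Rightarrow(1)$ is equally formal: algebraically trivial line bundles and their duals are numerically trivial, hence nef, and nefness is preserved by extensions, so both $\mathcal E$ and $\mathcal E^{\vee}$ turn out to be nef. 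Finally $(3)\Rightarrow(4)$ is the classical Atiyah--Miyanishi--Mukai structure theorem for homogeneous vector bundles on an abelian variety, which I would simply invoke.

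The substantive step is $(1)\Rightarrow(3)$. My plan is to use that for every closed point $a\in A$ the translate $t_a^{*}\mathcal E$ is again numerically flat with the same Hilbert polynomial as $\mathcal E$, and that numerically flat bundles have vanishing Chern classes, so $\mathcal E$ is Gieseker-semistable with trivial discriminant; by Langer's results this is equivalent to strong semistability in positive characteristic. I would then pass to the Jordan--Hölder filtration and argue that every stable factor of $\mathcal E$ is an algebraically trivial line bundle. This would simultaneously yield $(1)\Rightarrow(2)$ as a byproduct, after which each graded piece is homogeneous and a cohomological analysis of the extension classes in the filtration (controlling $\mathrm{Ext}^1$'s between algebraically trivial line bundles on $A$) would promote translation invariance from the associated graded to $\mathcal E$ itself.

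The main obstacle is the rank-one claim for stable numerically flat bundles: in positive characteristic this is delicate because the usual Simpson correspondence is not available. The approach I would try is to combine Langer's Bogomolov-type inequality for strongly semistable sheaves, which forces the higher Chern classes to vanish, with the fact that on an abelian variety a strongly semistable bundle with all Chern classes equal to zero becomes trivial after pullback along some isogeny, and then to exploit that the \'etale fundamental group of an abelian variety is abelian so that its irreducible continuous representations are one-dimensional. This last input is what cuts the stable factors down to $\mathrm{Pic}^0(A)$ and drives the entire argument.
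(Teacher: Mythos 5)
Your cycle structure and the three easy arrows are fine: $(4)\Rightarrow(2)$ and $(2)\Rightarrow(1)$ are indeed formal (duals and extensions of nef bundles stay nef), and $(3)\Rightarrow(4)$ is Miyanishi's theorem, which is exactly what the paper cites. The problem is concentrated in your $(1)\Rightarrow(3)$, where two of your key inputs fail. First, the claim that a strongly semistable bundle on an abelian variety with vanishing Chern classes becomes trivial after pullback along some isogeny is false: a non-torsion line bundle $\mathcal L\in\mathrm{Pic}^0(A)$ is numerically flat with all Chern classes zero, yet $[n]^*\mathcal L\cong\mathcal L^{n}$ is never trivial, and no isogeny pullback trivializes it. Second, numerically flat bundles in positive characteristic are \emph{not} governed by the \'etale fundamental group: they correspond to representations of Langer's S-fundamental group scheme $\pi_1^S(A)$, and a bundle such as a non-torsion element of $\mathrm{Pic}^0(A)$ (or the Atiyah extension of $\mathcal O_A$ by $\mathcal O_A$) is not trivialized by any finite \'etale cover, so no statement about continuous representations of $\pi_1^{\text{\'et}}(A)$ can see it. The abelianness you want to exploit is a genuine theorem about $\pi_1^S(A)$, not about $\pi_1^{\text{\'et}}(A)$, and proving it is precisely the hard content here. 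Your final step --- promoting translation invariance from the associated graded to $\mathcal E$ by ``controlling $\mathrm{Ext}^1$'s'' --- is also left as a gesture; an extension of homogeneous bundles being homogeneous is not a formal fact, and the standard proof goes through the structure theorem $(4)$ rather than directly through extension classes.

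The paper's route sidesteps all of this by quoting the deep input rather than reproving it: $(1)\Rightarrow(2)$ is Langer's result (commutativity of the S-fundamental group scheme of an abelian variety, \cite[\S 6]{Lan12}), $(2)\Rightarrow(4)$ is an elementary induction on the rank, $(4)\Rightarrow(1)$ is obvious, and the equivalence with homogeneity is split into the two classical citations $(3)\Rightarrow(4)$ (Miyanishi) and $(4)\Rightarrow(3)$ (Mukai). If you want to keep your cycle, the correct fix is to replace your isogeny/\'etale-$\pi_1$ argument for the rank-one claim by the citation of Langer, at which point you land on $(2)$, and then reach $(3)$ via $(2)\Rightarrow(4)\Rightarrow(3)$ exactly as the paper does rather than by a direct extension-class analysis.
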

\begin{proof}
(1) $\Rightarrow$ (2): This follows from \cite[\S 6]{Lan12}.  
(2) $\Rightarrow$ (4): This is easily proved by induction on the rank. 
(4) $\Rightarrow$ (1): This is obvious. 
(3) $\Rightarrow$ (4): This has been proved by \cite[Theorem~2.3]{Miy73}.
(4) $\Rightarrow$ (3): This has been shown in \cite[Theorem~4.17]{Muk78}. 
\end{proof}
\section{Albanese morphisms of varieties with numerically trivial canonical divisor} \label{section:alb}
In this section, we study the Albanese morphisms of varieties with numerically trivial canonical divisor. 
\begin{lem} \label{lem:num_triv}
Let $V$ be a projective variety over $k$. 
Let $\mathcal L$ be a numerically trivial line bundle on $V$. 
Let $R$ be a finitely generated $\mathbb F_p$-algebra over which the model $V_R$ of $V$ and $\mathcal L_R$ of $\mathcal L$ can be defined. 
Then there exists a dense open subset $S$ of $\mathrm{Spec}\,R$ such that 
$\mathcal L_\mu$ is numerically trivial for every closed point $\mu \in S$. 
\end{lem}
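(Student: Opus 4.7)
The strategy is to translate the numerical triviality of $\mathcal L$ into the vanishing of certain intersection numbers with a fixed ample class, and then to use the flatness of $V_R\to\mathrm{Spec}\,R$ to transfer these vanishings from the generic fibre to all closed fibres over a dense open subset of $\mathrm{Spec}\,R$.

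After replacing $R$ by a suitable localisation (which is a dense open subset of $\mathrm{Spec}\,R$), I may assume that $R$ is a domain, that $V_R\to\mathrm{Spec}\,R$ is flat and projective with geometrically integral fibres of dimension $d=\dim V$, and that there is a line bundle $\mathcal A_R$ on $V_R$ whose base change to $V$ is an ample line bundle $\mathcal A$ and which is itself relatively ample over $R$; in particular $\mathcal A_\mu$ is ample on $V_\mu$ for every closed point $\mu\in\mathrm{Spec}\,R$.

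By Snapper's lemma, for each closed point $\mu$ and integers $a,b\geq 0$ the Euler characteristic $\chi(V_\mu,\mathcal L_\mu^{\otimes a}\otimes\mathcal A_\mu^{\otimes b})$ is a polynomial $P_\mu(a,b)$ of total degree at most $d$ whose coefficient of $a^ib^{d-i}$ equals $\tfrac{1}{i!(d-i)!}(\mathcal L_\mu^i\cdot\mathcal A_\mu^{d-i})$. By flatness each such Euler characteristic is locally constant in $\mu$, hence so is every intersection number $\mathcal L_\mu^i\cdot\mathcal A_\mu^{d-i}$. Numerical triviality of $\mathcal L$ on $V$ forces $\mathcal L^i\cdot\mathcal A^{d-i}=0$ for every $i\geq 1$ on the generic fibre, so the same equalities hold on every closed fibre $V_\mu$ over a dense open $S\subseteq\mathrm{Spec}\,R$. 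Specialising to $i=1,2$ and invoking the Hodge index inequality
\[(D\cdot A^{d-1})^2\geq (D^2\cdot A^{d-2})\cdot A^d,\]
with equality if and only if $D\equiv rA$ for some $r\in\mathbb R$, I obtain $\mathcal L_\mu\equiv 0$ on $V_\mu$ for every $\mu\in S$.

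The main delicate point is the applicability of this Hodge-index-type inequality on the possibly singular projective variety $V_\mu$. It is a form of the Khovanskii--Teissier inequality and is valid in arbitrary characteristic; if needed it can be reduced to the normal case by pulling back to the normalisation of $V_\mu$, under which both intersection numbers and numerical triviality are preserved. A conceptual alternative would be to spread out an algebraic equivalence $\mathcal L^{\otimes N}\sim\mathcal O_V$ witnessing $\mathcal L\in\mathrm{Pic}^\tau(V)$ along the relative Picard scheme, but the intersection-theoretic approach outlined here is likely what Tanaka suggested.
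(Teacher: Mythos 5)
Your proposal is correct and takes essentially the same route as the paper: both arguments detect numerical triviality by the vanishing of the intersection numbers $(\mathcal L\cdot\mathcal A^{\dim V-1})$ and $(\mathcal L^{2}\cdot\mathcal A^{\dim V-2})$ against a relatively ample class, and then transfer that vanishing to the closed fibres using the constancy of intersection numbers in flat proper families (which you justify explicitly via Snapper polynomials and invariance of Euler characteristics, while the paper leaves it implicit). The one point where you are looser than the paper is the validity of the Hodge-index equality case on the possibly singular fibres $V_\mu$ — normalisation alone does not reduce this to the smooth case, and resolution is unavailable in higher dimension in characteristic $p$ — but the statement you need is exactly Kleiman's numerical-triviality criterion \cite[Proposition~3]{Kle66}, valid for arbitrary projective varieties, which is what the paper cites.
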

\begin{proof}
Let $S \subseteq \mathrm{Spec}\,R$ be a dense open subset such that 
the morphism $V_R \to \mathrm{Spec}\,R$ is projective over $S$. 
Put $V_S:=\left(V_R\right)_S$. 
Let $\mathcal H_S$ be a line bundle on $V_S$ that is ample over $S$,  
and set $\mathcal H :=\mathcal H_S|_V$.
By \cite[Chapter~I, \S 4, Proposition~3]{Kle66}, 
we have that $\mathcal L$ is numerically trivial if and only if 
$
(\mathcal L \cdot \mathcal H^{\dim V-1})  
= (\mathcal L^2 \cdot \mathcal H^{\dim V-2})  
=0. 
$
Since the intersection number is independent of the choice of fiber, 
the lemma follows. 
\end{proof}
\begin{defn}[\textup{\cite[Definition~3.1]{SS10}}]
Let $X$ be an \textit{affine} normal variety and let $\Delta$ be an effective $\mathbb Q$-Weil divisor on $X$.  
We say that the pair $(X, \Delta)$ is \textit{strongly $F$-regular} if 
for every effective divisor $D$ on $X$, there exists an $e\in\mathbb Z_{>0}$ 
such that the composite
$$
\mathcal O_X 
\xrightarrow{{F_X^e}^\sharp} {F^e_X}_* \mathcal O_X
\xrightarrow{\iota} {F^e_X}_* \mathcal O_X(\lceil (p^e-1)\Delta +D \rceil)
$$
splits. Here, $\iota$ is the natural inclusion. 

Let $X$ be a normal variety and let $\Delta$ be an effective $\mathbb Q$-Weil divisor on $X$. We say that $(X,\Delta)$ is \textit{strongly $F$-regular} if there exists an affine open cover $\{U_i\}$ of $X$ such that $(U_i,\Delta|_{U_i})$ is strongly $F$-regular for each $i$.
When $(X, 0)$ is strongly $F$-regular, 
we simply say that $X$ is \textit{strongly $F$-regular}. 
\end{defn}
Recall if $X$ is strongly $F$-regular, then $X$ is Cohen--Macaulay (\cite[(6.27) Proposition]{HH94}).
\begin{prop} \label{prop:alb}
Let $X$ be a Cohen--Macaulay normal projective variety and let $\Delta$ be an effective $\mathbb Q$-Weil divisor on $X$. 
Assume that $m(K_X+\Delta)$ is a numerically trivial Cartier divisor for an integer $m>0$ not divisible by $p$. 
Let $\alpha:X\to A$ be the Albanese morphism of $X$,
and let $X_{\overline \eta}$ denote the geometric generic fiber of $\alpha$ over its image. 
If $(X_{\overline \eta}, \Delta|_{X_{\overline \eta}})$ is strongly $F$-regular, 
then 
\begin{enumerate}
\item[\rm (1)] $\alpha$ is a flat surjective morphism, 
\item[\rm (2)] $\mathrm{Supp}\,\Delta$ does not contain any component of any closed fiber of $f$, and  
\item[\rm (3)] for every closed fiber $X_y$, the pair $(X_y,\Delta|_{X_y})$ is strongly $F$-regular. 
\end{enumerate}
\end{prop}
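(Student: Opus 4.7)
The plan is to read off (1) and (3) directly from the two cited structural results \cite[Theorem~1.1]{Eji19w} and \cite[Theorems 4.1 and 9.1]{PZ19}, and then to derive (2) as a formal consequence of the pullback convention set up in \S\ref{subsection:notation}. The hypotheses required for those theorems---namely $X$ normal projective, $m(K_X+\Delta)$ a numerically trivial Cartier divisor with $p \nmid m$, and $(X_{\overline\eta},\Delta|_{X_{\overline\eta}})$ strongly $F$-regular---are precisely what this proposition assumes, so no additional reductions are required.

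Concretely, I would invoke \cite[Theorem~1.1]{Eji19w} together with \cite[Theorems 4.1 and 9.1]{PZ19} to conclude simultaneously that $\alpha$ is flat and surjective, that every closed fiber $X_y$ is integral, and that $(X_y,\Delta|_{X_y})$ is strongly $F$-regular. This yields (1) and (3) at once. Should one wish to decouple the ingredients, flatness can also be recovered from surjectivity plus integrality (hence equidimensionality) of the fibers by miracle flatness, using that $X$ is Cohen--Macaulay and $A$ is smooth.

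For (2), I would observe that the very assertion that $(X_y,\Delta|_{X_y})$ is strongly $F$-regular presupposes that $\Delta|_{X_y}$ is a well-defined $\mathbb{Q}$-Weil divisor on $X_y$; by the pullback convention of \S\ref{subsection:notation} this means that no irreducible component of $\mathrm{Supp}\,\Delta$ contains $X_y$. Because $X_y$ is integral by (3), its unique component is $X_y$ itself, and so $\mathrm{Supp}\,\Delta$ does not contain any component of any closed fiber.

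The main obstacle is essentially bookkeeping rather than conceptual: the substantive input is entirely outsourced to \cite{Eji19w} and \cite{PZ19}, and the remaining step for (2) is purely formal. The only point to watch is to match hypotheses with conclusions across the two cited papers so as not to rely on a circular chain, and, if one takes the miracle-flatness route, to confirm that integrality of every fiber has been extracted \emph{before} flatness is claimed.
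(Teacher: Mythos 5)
There is a genuine gap in your treatment of (3), and it is precisely where the real work of the paper lies. You claim that $(X_y,\Delta|_{X_y})$ being strongly $F$-regular for \emph{every} closed fiber can be read off directly from \cite[Theorem~1.1]{Eji19w} and \cite[Theorems~4.1 and~9.1]{PZ19} ``with no additional reductions required.'' But \cite[Theorem~9.1]{PZ19}, which is the result that identifies an arbitrary closed fiber with a general (hence integral, strongly $F$-regular) one, is only applicable over $\overline{\mathbb F_p}$; the paper invokes it only \emph{after} arranging to be in that situation. Over a general algebraically closed field $k$ this forces a spreading-out argument, which is exactly what the paper's proof does: it chooses models $\alpha_R\colon X_R\to A_R$, $\Delta_R$, $y_R$, $z_R$ over a finitely generated $\mathbb F_p$-algebra $R$; uses Lemma~\ref{lem:num_triv} to find a dense open $S\subseteq\mathrm{Spec}\,R$ on which $K_{X_\mu}+\Delta_\mu$ stays numerically trivial; observes that closed points $\mu\in S$ have finite residue fields, so that \cite[Theorem~9.1]{PZ19} applies to give $\left((X_\mu)_{y_\mu},(\Delta_\mu)_{y_\mu}\right)\cong\left((X_\mu)_{z_\mu},(\Delta_\mu)_{z_\mu}\right)$; and then shuttles geometric integrality back and forth between the original fibers and the reductions via \cite[Th\'eor\`eme~12.2.4]{Gro65}, finishing with \cite[Corollary~4.21]{PSZ13} for strong $F$-regularity. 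Your proposal reproduces the one-sentence summary given in the paper's introduction, but that summary is not a proof; the content of Proposition~\ref{prop:alb} is the reduction you skipped.

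Two secondary points. First, your derivation of (2) from (3) inverts the logical order: (2) is what makes the restriction $\Delta|_{X_y}$ in (3) well defined, and the paper establishes it independently via \cite[Proposition~4.2~(2)]{PZ19}; since your proof of (3) is an unanchored citation, your (2) collapses with it. Second, your parenthetical claim that integrality of the fibers implies equidimensionality (as input to miracle flatness) is false --- a blow-up of a smooth surface at a point has integral fibers of jumping dimension. The paper instead gets equidimensionality from \cite[Theorem~4.1]{PZ19} and only then applies miracle flatness \cite[Exercise~III.10.9]{Har77}, using that $X$ is Cohen--Macaulay.
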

%
%\begin{rem}
%Note that since $X_{\overline\eta}$ is $F$-pure, $X_{\eta}$ is geometrically normal. 
%\end{rem}
%
\begin{proof}
By \cite[Theorem~1.1]{Eji19w}, the morphism $\alpha$ is surjective and $X_{\overline\eta}$ is integral. 
By \cite[Theorem~4.1]{PZ19}, $\alpha$ is equi-dimensional, so $\alpha$ is flat, since $X$ is Cohen--Macaulay \cite[p.179, Corollary]{Mat86}. 
%\cite[Excercise I\hspace{-1pt}I\hspace{-1pt}I.10.9]{Har77}.  
Thus (1) holds. 
Furthermore, (2) follows from \cite[Proposition~4.2~(2)]{PZ19}. 
We prove (3). 
Fix a closed point $y\in A$. 
We show that $X_y$ is integral. 
Let $z\in A$ be a general closed point. 
Applying \cite[Th\'eor\`eme~12.2.4]{Gro65}, %(resp. \cite[Corollary~4.21]{PSZ13}) , 
we may assume that $X_z$ is integral. %(resp. $(X_z, \Delta|_{X_z})$ is strongly $F$-regular). 
Let $R$ be a finitely generated $k$-algebra over which the models $\alpha_R: X_R \to A_R$, $\Delta_R$, $y_R$ and $z_R$ of $\alpha:X\to A$, $\Delta$, $y$ and $z$ can be defined, respectively.
Then, by Lemma~\ref{lem:num_triv}, there is a dense open subset $S\subseteq\mathrm{Spec}\,R$ such that $K_{X_\mu}+\Delta_{\mu}$ is a numerically trivial $\mathbb Q$-Cartier divisor for every $\mu\in S$. 
Since the residue field $\kappa(\mu)$ of $\mu$ is finite, 
we have $\kappa(\mu)\subset \overline{\mathbb F_p}$. 
Therefore, thanks to \cite[Theorem~9.1]{PZ19}, we see that 
$
\left( (X_\mu)_{y_\mu}, (\Delta_\mu)_{y_\mu} \right) 
\cong 
\left( (X_\mu)_{z_\mu}, (\Delta_\mu)_{z_\mu} \right).
$ 
Since $\left( (X_\mu)_{z_\mu}, (\Delta_\mu)_{z_\mu}\right)$ is the reduction of $(X_z, \Delta|_{X_z})$ over $\mu$, 
we may assume that $(X_\mu)_{z_\mu}$ is geometrically integral, using \cite[Th\'eor\`eme~12.2.4]{Gro65}. 
Thus $(X_\mu)_{y_\mu}$ is geometrically integral, 
and so $X_y$ is integral by \cite[Th\'eor\`eme~12.2.4]{Gro65} again. 
Applying \cite[Theorem~B]{PSZ18} %\cite[Coorollary~4.21]{PSZ13} 
and an argument similar to the above, 
we can prove that $(X_y,\Delta|_{X_y})$ is strongly $F$-regular. 
\end{proof}
\section{Proof of the main theorem}
In this section, we prove the main theorem in this paper. 
We first show the following three lemmas that are used in the proof of the main theorem. 
\begin{lem} \label{lem:product_SFR}
Let $V$ and $W$ be normal varieties, and let $\Gamma$ and $\Delta$ be effective $\mathbb Q$-Weil divisors on $V$ and $W$, respectively. 
Assume that $(V,\Gamma)$ and $(W,\Delta)$ are strongly $F$-regular.  
Then the pair $(V\times_k W, \mathrm{pr}_1^*\Gamma +\mathrm{pr}_2^*\Delta)$
is also strongly $F$-regular, where $\mathrm{pr}_i$ is the $i$-th projection. 
\end{lem}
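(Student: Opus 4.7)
The plan is to reduce the question to the affine setting and then produce a single splitting on the product by tensoring Frobenius splittings coming from $V$ and $W$, invoking the standard equivalence between strong $F$-regularity and non-triviality of the test ideal in the $F$-finite setting.

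First, since strong $F$-regularity is Zariski-local, I would cover $V$ and $W$ by affine opens and reduce to the case $V=\mathrm{Spec}\,A$, $W=\mathrm{Spec}\,B$ with $A$, $B$ finitely generated normal $k$-algebras. Since $k$ is (algebraically closed, hence) perfect, $A$, $B$, and $A\otimes_k B$ are all $F$-finite. For $F$-finite normal pairs, strong $F$-regularity of $(R,\Theta)$ is equivalent to the existence of some $e>0$, some $R$-linear map $\rho\colon F^e_*R(\lceil(p^e-1)\Theta\rceil)\to R$ and some element $x$ with $\rho(x)=1$ (equivalently, $\tau(R,\Theta)=R$). Applying this to the hypothesized strong $F$-regularity of $(A,\Gamma)$ and $(B,\Delta)$, and enlarging to a common $e$, I obtain $A$-linear $\phi\colon F^e_*A(\lceil(p^e-1)\Gamma\rceil)\to A$ with $\phi(x)=1$ and $B$-linear $\psi\colon F^e_*B(\lceil(p^e-1)\Delta\rceil)\to B$ with $\psi(y)=1$ for suitable elements $x$, $y$.

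Finally, because $k$ is perfect, the Frobenius on $A\otimes_k B$ factors as $F^e_A\otimes_k F^e_B$, and together with the identity
\[
\lceil(p^e-1)(\mathrm{pr}_1^*\Gamma+\mathrm{pr}_2^*\Delta)\rceil \;=\; \mathrm{pr}_1^*\lceil(p^e-1)\Gamma\rceil+\mathrm{pr}_2^*\lceil(p^e-1)\Delta\rceil
\]
(which holds because the projections have integral fibers, so pullback commutes with the ceiling, and $\mathrm{pr}_1^*\Gamma$ and $\mathrm{pr}_2^*\Delta$ share no components) this yields a canonical $(A\otimes_k B)$-module isomorphism
\[
F^e_*(A\otimes_k B)\bigl(\lceil(p^e-1)(\mathrm{pr}_1^*\Gamma+\mathrm{pr}_2^*\Delta)\rceil\bigr) \;\cong\; F^e_*A\bigl(\lceil(p^e-1)\Gamma\rceil\bigr)\otimes_k F^e_*B\bigl(\lceil(p^e-1)\Delta\rceil\bigr).
\]
Under this identification $\phi\otimes_k\psi$ is an $(A\otimes_k B)$-linear map sending $x\otimes y$ to $1\otimes 1=1$, so $\tau(A\otimes_k B,\mathrm{pr}_1^*\Gamma+\mathrm{pr}_2^*\Delta)=A\otimes_k B$, and the pair is strongly $F$-regular by the equivalence above. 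The main technical point is the Frobenius-decomposition isomorphism at the level of the twisted divisorial sheaves; once this is set up, the rest of the argument is a formal tensoring of splittings, and the reduction from the a priori universally quantified definition of strong $F$-regularity to the existence of a single splitting is the conceptual step that makes the construction work.
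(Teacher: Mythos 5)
Your overall strategy (reduce to the affine case, then tensor splittings over the perfect base field to obtain a splitting on the product) is the same as the paper's, but the criterion you use to convert a single splitting into strong $F$-regularity is false, and this is a genuine gap. You claim that for an $F$-finite normal pair $(R,\Theta)$, strong $F$-regularity is equivalent to the existence of some $e>0$ and some $R$-linear map $\rho\colon F^e_*R(\lceil(p^e-1)\Theta\rceil)\to R$ hitting $1$ (and that this is equivalent to $\tau(R,\Theta)=R$). That condition is (sharp) $F$-purity of the pair, not strong $F$-regularity. Concretely, take $R=k[x]$ and $\Theta=\mathrm{div}(x)$: composing multiplication by $x^{p^e-1}$ with the Cartier operator gives a map $F^e_*R((p^e-1)\,\mathrm{div}(x))\to R$ sending $1\mapsto 1$, so your criterion is satisfied for every $e$, yet $\tau(R,\Theta)=(x)\neq R$ and the pair is not strongly $F$-regular (the coefficient is $1$; the pair is only log canonical, and indeed no splitting exists once one twists by the extra divisor $\mathrm{div}(x)$). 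The universal quantifier over auxiliary divisors $D$ in the definition cannot be dropped for free: every correct ``one splitting suffices'' criterion (Hochster--Huneke's single test element criterion, or \cite[Theorem~3.9]{SS10}) requires the splitting to be twisted by an extra effective divisor $C$ together with the hypothesis that the pair is strongly $F$-regular on the complement of $\mathrm{Supp}\,C$. Consequently your tensored map $\phi\otimes_k\psi$ only certifies $F$-purity of the product pair, which is strictly weaker than the conclusion of the lemma.

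The repair is exactly the paper's proof, and it is compatible with the parts of your argument that are correct (the K\"unneth-type identification of Frobenius pushforwards over the perfect field $k$, and the compatibility of ceilings with pullback along the projections, which the paper uses implicitly as well). In the affine case, choose effective divisors $D$ on $V$ and $E$ on $W$ such that $(V\setminus\mathrm{Supp}\,D,\,\Gamma|_{V\setminus\mathrm{Supp}\,D})$ and $(W\setminus\mathrm{Supp}\,E,\,\Delta|_{W\setminus\mathrm{Supp}\,E})$ are log smooth. Strong $F$-regularity gives, for some common $e$, splittings of $\mathcal O_V\to{F_V^e}_*\mathcal O_V(\lceil(p^e-1)\Gamma+D\rceil)$ and of the analogous map on $W$, \emph{with the divisors $D$ and $E$ included}; tensoring these yields a splitting of $\mathcal O_{V\times W}\to{F_{V\times W}^e}_*\mathcal O_{V\times W}(\lceil(p^e-1)\Theta+C\rceil)$ with $C=\mathrm{pr}_1^*D+\mathrm{pr}_2^*E$. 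Since the complement of $\mathrm{Supp}\,C$ is a product of log smooth pairs (with coefficients $<1$), it is log smooth, hence strongly $F$-regular, and \cite[Theorem~3.9]{SS10} applies. Note why log smoothness, rather than mere strong $F$-regularity, is imposed on the complements: the statement ``a product of strongly $F$-regular pairs is strongly $F$-regular'' is precisely what is being proved, so invoking it for the complement would be circular; log smoothness breaks that circularity.
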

Note that since $\mathrm{pr}_i$ is flat, we can define the pullback $\mathrm{pr}_i^* D$ of any $\mathbb Q$-divisor $D$. 
\begin{proof}
We assume that $V$ and $W$ are affine. 
Let $D$ (resp. $E$) be an effective Cartier divisor on $V$ (resp. $W$) such that $(V_D, \Gamma_D)$ 
(resp. $(W_E, \Delta_E)$) 
is log smooth, 
where $V_D := V\setminus \mathrm{Supp}\,D$ and $\Gamma_D := \Gamma|_{V_D}$
(resp. $W_E := W\setminus \mathrm{Supp}\,E$ and $\Delta_E := \Delta|_{W_E}$). 
Here, by log smooth we mean that $V_D$ is smooth and $\Gamma_D$ has simple normal crossing support. 
Set $C := \mathrm{pr}_1^*D + \mathrm{pr}_2^*E$ 
and $\Theta:= \mathrm{pr}_1^*\Gamma +\mathrm{pr}_2^*\Delta$. 
Then 
\begin{align*}
\left( V\times_k W \setminus \mathrm{Supp}\,C,~
\Theta |_{V\times_k W \setminus \mathrm{Supp}\,C} \right)
= 
\left( V_D \times_k W_E,~ (\mathrm{pr}_1|_{V_D})^* \Gamma_D + (\mathrm{pr}_2|_{W_E})^* \Delta_E \right) 
\end{align*}
is log smooth. 
Since
\begin{align*}
\mathcal O_V & \to {F_V^e}_*\mathcal O_V(\lceil(p^e-1)\Gamma +D \rceil ) 
\textup{~and} \\
\mathcal O_W & \to {F_W^e}_*\mathcal O_W(\lceil(p^e-1)\Delta +E \rceil) 
\end{align*}
split for some $e>0$, the morphism 
$$
\mathcal O_{V\times W} \to
{F_{V\times W}^e}_* \mathcal O_{V\times W}(\lceil (p^e-1) \Theta  +C \rceil) 
$$
splits. Hence, the assertion follows from \cite[Theorem~3.9]{SS10}. 
\end{proof}
\begin{lem} \label{lem:surjective}
Let $f:V\to W$ be a surjective morphism between projective varieties. 
Let $A$ be an $f$-ample Cartier divisor on $V$ such that the natural morphism 
$ f^*f_*\mathcal O_V(A) \to \mathcal O_V(A)$ is surjective. 
Let $\mathcal F$ be a coherent sheaf on $V$. 
Then there exists an $m_0\in\mathbb Z_{>0}$ such that the multiplication morphism
$$
f_* (\mathcal F \otimes \mathcal O_V(mA +N) ) \otimes f_* \mathcal O_V(nA)
\to 
f_* (\mathcal F \otimes \mathcal O_V((m+n)A +N) )
$$
is surjective for each $m,n\in\mathbb Z_{>0}$ with $m\ge m_0$ and every $f$-nef Cartier divisor $N$ on $V$.  
\end{lem}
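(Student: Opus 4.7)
The plan is to first establish the base case $n = 1$ via a Castelnuovo--Mumford-style argument combined with relative Fujita vanishing, and then iterate in $n$.

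For the base case, I produce a locally free replacement of $f_*\mathcal O_V(A)$. Since $W$ is projective and $f_*\mathcal O_V(A)$ is coherent, for a suitable ample divisor $H$ on $W$ and large enough $j, r$ there is a surjection $\mathcal E := \mathcal O_W(-jH)^{\oplus r} \twoheadrightarrow f_*\mathcal O_V(A)$. Pulling back and composing with the hypothesis surjection $f^*f_*\mathcal O_V(A) \twoheadrightarrow \mathcal O_V(A)$ produces $\phi: f^*\mathcal E \twoheadrightarrow \mathcal O_V(A)$; set $\mathcal K := \ker\phi$. Tensoring the short exact sequence $0 \to \mathcal K \to f^*\mathcal E \to \mathcal O_V(A) \to 0$ successively with the line bundle $\mathcal O_V(mA+N)$ and then with $\mathcal F$ preserves exactness (the relevant $\mathrm{Tor}_1$ vanishes since line bundles are flat). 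Pushing forward and applying the projection formula to the locally free $\mathcal E$ yields the exact sequence
\begin{equation*}
\mathcal E \otimes f_*\mathcal F(mA+N) \longrightarrow f_*\mathcal F((m+1)A+N) \longrightarrow R^1 f_*\bigl(\mathcal K \otimes \mathcal F(mA+N)\bigr).
\end{equation*}
The relative Fujita vanishing theorem, applied to the coherent sheaf $\mathcal K \otimes \mathcal F$ and the $f$-ample $A$, furnishes an $m_0$ such that the last term vanishes for every $m \geq m_0$ and every $f$-nef Cartier divisor $N$. The middle arrow is therefore surjective, and since it factors as
\begin{equation*}
\mathcal E \otimes f_*\mathcal F(mA+N) \longrightarrow f_*\mathcal O_V(A) \otimes f_*\mathcal F(mA+N) \longrightarrow f_*\mathcal F((m+1)A+N),
\end{equation*}
the second map is also surjective, establishing the $n = 1$ case.

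For general $n$, I induct on $n$. In the inductive step, the composition
\begin{equation*}
f_*\mathcal F(mA+N) \otimes f_*\mathcal O_V(A) \otimes f_*\mathcal O_V(nA) \longrightarrow f_*\mathcal F((m+1)A+N) \otimes f_*\mathcal O_V(nA) \longrightarrow f_*\mathcal F((m+n+1)A+N)
\end{equation*}
is surjective (the first arrow by the $n=1$ case, the second by the induction hypothesis applied at $m+1 \geq m_0$). This same composition factors through $f_*\mathcal F(mA+N) \otimes f_*\mathcal O_V((n+1)A)$ via the multiplication $f_*\mathcal O_V(A) \otimes f_*\mathcal O_V(nA) \to f_*\mathcal O_V((n+1)A)$ on the last two tensor factors, forcing surjectivity of $f_*\mathcal F(mA+N) \otimes f_*\mathcal O_V((n+1)A) \to f_*\mathcal F((m+n+1)A+N)$ and completing the induction.

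The central technical input is the relative Fujita vanishing, which supplies a threshold $m_0$ uniform over all $f$-nef $N$; ordinary relative Serre vanishing would be insufficient here, since its threshold generally depends on $N$. The remaining ingredients---flatness of line bundles for the $\mathrm{Tor}$ vanishing, and applicability of the projection formula to the locally free $\mathcal E$---are routine.
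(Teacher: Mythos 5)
Your proof is correct, and it rests on the same essential input as the paper's: Keeler's relative Fujita vanishing, which is exactly what makes the threshold $m_0$ uniform over all $f$-nef divisors $N$ (your closing remark that a Serre-type vanishing would not suffice is precisely the right point). The difference lies in the intermediate machinery. The paper's proof is two sentences: it observes that $\mathcal F \otimes \mathcal O_V(mA+N)$ is $0$-regular with respect to $A$ and $f$ (again by relative Fujita vanishing, applied to $\mathcal F$ and absorbing the fiber dimension into $m_0$), and then quotes the relative Castelnuovo--Mumford regularity statement (Example 1.8.24 in Lazarsfeld's Positivity I) as a black box, which yields the surjectivity for all $n$ at once. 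You instead reprove directly the fragment of that machinery that is actually needed: you resolve $f_*\mathcal O_V(A)$ by a locally free sheaf $\mathcal E$, pass to the kernel $\mathcal K$ of $f^*\mathcal E \twoheadrightarrow \mathcal O_V(A)$, kill the obstruction $R^1f_*\bigl(\mathcal K \otimes \mathcal F(mA+N)\bigr)$ by applying Fujita vanishing to the single coherent sheaf $\mathcal K \otimes \mathcal F$, and then handle general $n$ by induction via associativity of the multiplication maps. Each step checks out: the $\mathrm{Tor}_1$ vanishing is correct because the quotient term of the twisted sequence is invertible, the projection formula applies since $\mathcal E$ is locally free, the factorization through $f_*\mathcal O_V(A) \otimes f_*(\mathcal F(mA+N))$ is the standard description of the multiplication map, and the inductive step legitimately uses the hypothesis at $m+1 \ge m_0$. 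As for what each approach buys: the paper's is maximally short but leans on a cited regularity formalism; yours is self-contained modulo Keeler's theorem, requires vanishing only of a single $R^1$ of one auxiliary sheaf rather than $0$-regularity in all cohomological degrees, and consequently produces a slightly more economical (and transparently uniform) threshold $m_0$.
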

\begin{proof}
By the relative version of Castelnuovo--Mumford regularity \cite[Example~1.8.24]{Laz04I},
it is enough to show that $\mathcal F \otimes \mathcal O_V(mA +N)$ is $0$-regular with respect to $A$ and $f$. 
This follows from the relative Fujita vanishing \cite[Theorem~1.5]{Kee03}. 
\end{proof}
\begin{lem} \label{lem:nef}
Let $\mathcal E$ be a vector bundle on a projective variety $V$. 
Let $H$ be a Cartier divisor on $V$. 
If $\mathcal O_V(H) \otimes {F_V^e}^*\mathcal E$ is nef for infinitely many $e \ge 1$, then $\mathcal E$ is nef. 
\end{lem}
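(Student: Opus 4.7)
The plan is to reduce nefness of $\mathcal E$ to the curve case and then exploit that the hypothesis passes to curve pullbacks because absolute Frobenius commutes with any morphism. Recall that a vector bundle on a projective variety is nef if and only if its pullback along every morphism from a smooth projective curve is nef, and that a vector bundle on a smooth projective curve is nef if and only if every line bundle quotient has non-negative degree. So I would fix an arbitrary morphism $\nu\colon \tilde C\to V$ from a smooth projective curve $\tilde C$, and a line bundle quotient $\nu^*\mathcal E\twoheadrightarrow \mathcal L$, and aim to show $\deg\mathcal L\ge 0$.

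The key identity I would use is $\nu\circ F_{\tilde C}^e=F_V^e\circ \nu$ (as morphisms of abstract schemes), which gives $\nu^*\bigl({F_V^e}^*\mathcal E\bigr)\cong {F_{\tilde C}^e}^*(\nu^*\mathcal E)$. Then pulling back the (by assumption) nef bundle $\mathcal O_V(H)\otimes {F_V^e}^*\mathcal E$ along $\nu$ produces a nef bundle
\[
\nu^*\mathcal O_V(H)\otimes {F_{\tilde C}^e}^*(\nu^*\mathcal E)
\]
on $\tilde C$. The given surjection $\nu^*\mathcal E\twoheadrightarrow\mathcal L$ pulls back under ${F_{\tilde C}^e}^*$ to a surjection onto $\mathcal L^{\otimes p^e}$, and tensoring with $\nu^*\mathcal O_V(H)$ exhibits $\nu^*\mathcal O_V(H)\otimes \mathcal L^{\otimes p^e}$ as a quotient of a nef bundle, hence with non-negative degree.

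This yields the inequality $\deg \nu^*H + p^e\deg\mathcal L \ge 0$, i.e.\ $\deg\mathcal L \ge -\deg(\nu^*H)/p^e$, valid for the infinitely many $e$ in the hypothesis. Letting $e\to\infty$ gives $\deg\mathcal L\ge 0$, so $\nu^*\mathcal E$ is nef on $\tilde C$. Since $\nu$ was arbitrary, $\mathcal E$ is nef on $V$.

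There is no real obstacle here; the only subtlety is the compatibility $\nu^*\circ {F_V^e}^*={F_{\tilde C}^e}^*\circ \nu^*$ for the absolute Frobenius, but this is immediate from the naturality of Frobenius. The rest is the standard trick of using $\deg(\mathcal L^{\otimes p^e})=p^e\deg\mathcal L$ to absorb the fixed error term $\deg \nu^*H$ in the limit.
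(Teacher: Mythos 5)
Your proposal is correct in substance but takes a genuinely different route from the paper. The paper works upstairs on $P=\mathbb P(\mathcal E)$: it identifies $\mathbb P({F_V^e}^*\mathcal E)$ with the base change $P_{V^e}$, pulls back along the relative Frobenius $F_{P/V}^{(e)}$ (which multiplies the tautological class $T$ by $p^e$) to see that $\pi^*H+p^eT$ is nef, and then uses closedness of the nef cone: $T=\lim_e\,(p^{-e}\pi^*H+T)$ is nef. Your argument is the curve-level shadow of the same limiting idea: you transport the hypothesis along a map $\nu\colon\tilde C\to V$ using $\nu\circ F_{\tilde C}^e=F_V^e\circ\nu$, hit a quotient line bundle with ${F_{\tilde C}^e}^*$, and absorb the fixed error $\deg\nu^*H$ by the factor $p^e$. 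The Frobenius compatibility, the surjection onto $\mathcal L^{\otimes p^e}$, and the limit $\deg\mathcal L\ge -\deg(\nu^*H)/p^e\to 0$ are all fine. The paper's version avoids any curve-testing criterion and is shorter once the relative Frobenius diagram is drawn; yours is more elementary and makes the ``divide by $p^e$'' mechanism explicit.

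One point does need repair: the criterion you ``recall'' --- that a bundle on a smooth projective curve is nef if and only if every quotient line bundle \emph{on that curve} has non-negative degree --- is false as stated, in every characteristic. For example, a semistable bundle of rank $2$ and degree $-1$ has all quotient line bundles of degree $\ge 0$ (semistability forces sub-line bundles to have degree $\le -1/2$, hence $\le -1$), yet it is not nef, since its determinant has negative degree; in characteristic $p$ there are further failures caused by Frobenius destabilization. The correct criterion (Barton's lemma; curves in $\mathbb P(\mathcal E)$ not lying in fibers correspond, via normalization, to pairs consisting of a morphism $\nu\colon\tilde C\to V$ from a smooth projective curve and a quotient line bundle of $\nu^*\mathcal E$) is: $\mathcal E$ is nef if and only if for \emph{every} such $\nu$ and every quotient line bundle $\mathcal L$ of $\nu^*\mathcal E$ one has $\deg\mathcal L\ge 0$. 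Fortunately this is exactly what you prove --- your quantification runs over all $\nu$, and that family is closed under precomposition with finite covers and with Frobenius --- so the argument goes through once you delete the intermediate (false) claim that each individual $\nu^*\mathcal E$ can be tested by its quotient line bundles on $\tilde C$ alone, and instead conclude nefness of $\mathcal E$ directly from the full quantified statement.
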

\begin{proof}
Set $P := \mathbb P(\mathcal E) := \mathrm{Proj} ( \bigoplus_{m\ge0} S^m(\mathcal E) )$. 
Let $\pi:P\to V$ be the natural projection. 
Then we have the following commutative diagram:
\begin{align*}
	\xymatrix{ & P^e \ar@/_100pt/[dd]_-{\pi^{(e)}} \ar[d]_-{F_{P/V}^{(e)}} \ar[dr]^-{F_P^e} &  \\
	\mathbb P({F_V^e}^* \mathcal E)	\ar@{=}[r] & P_{V^e} \ar[r]_-{w^{(e)}} \ar[d]_-{\pi_{V^e}} & P \ar[d]^-{\pi} \\ 
	   & V^e \ar[r]_-{F_V^e} & V
}
\end{align*}
Let $T$ be a Cartier divisor on $P$ such that $\mathcal O_P(T)\cong\mathcal O_P(1)$. %, where $\mathcal O_P(1)$ is the tautological line bundle on $P$. 
By the assumption, 
$
\mathcal O_{P_{V^e}}(\pi_{V^e}^* H) \otimes \mathcal O_{P_{V^e}}(1) 
$ 
is nef, so %. Taking the pullback by $F_{P/V}^{(e)}$, we see that 
$$
p^e (p^{-e} {\pi^{(e)}}^*H +T)
= {\pi^{(e)}}^* H +p^e T 
= {F_{P/V}^{(e)}}^*\left(\pi_{V^e}^* H + {w^{(e)}}^* T \right)
$$ 
is nef, and hence $p^{-e} {\pi^*H} +T$ is nef. 
Note that $\mathcal O_{P_{V^e}}(1) \cong {w^{(e)}}^* \mathcal O_P(1)$. 
Since this holds for infinitely many $e$ by the assumption, 
we conclude that $T$ is nef. 
\end{proof}
\begin{defn} \label{defn:L}
Let $\mathcal G$ be a coherent sheaf on a projective variety $V$. 
Then we set 
$$
\mathbb L(\mathcal G)
:=\left\{ \mathcal N \in \mathrm{Pic}^\tau(V) \middle| 
\textup{there is a non-zero morphism $\mathcal G \to \mathcal N$} \right\}. 
$$
\end{defn}
Recall that $\mathrm{Pic}^\tau(V)$ is the set of numerically trivial line bundles on $V$. 
\begin{lem} \label{lem:HN}
Let $\mathcal E$ be a nef vector bundle on a smooth projective variety $V$. 
Then $\mathbb L(\mathcal E)$ is a finite set. 
\end{lem}
\begin{proof}
Let $H$ be an ample divisor on $V$. We set the slope $\mu(\mathcal F)$ 
of a non-zero torsion-free coherent sheaf $\mathcal F$ as 
$$
\mu(\mathcal F) 
:= \frac{c_1(\mathcal F) \cdot H^{n-1}}{\mathrm{rank}(\mathcal F)}.
$$
Let $0=\mathcal E_0 \subset \mathcal E_1 \subset \cdots \subset \mathcal E_n =\mathcal E$ be the Harder--Narasimhan filtration of $\mathcal E$. 
Put $\mathcal E' := \mathcal E/\mathcal E_{n-1}$. 
We first prove that 
$ \mathbb L(\mathcal E) \subseteq \mathbb L(\mathcal E') $. 
Take an $\mathcal N \in \mathbb L(\mathcal E)$. 
We show that $\mathcal N \in \mathbb L(\mathcal E')$. 
By the definition of $\mathbb L(\mathcal E)$, 
there is a non-zero morphism $\mathcal E\to \mathcal N$. 
Let $\iota$ be the largest integer in $\{0,1,\ldots,n-1\}$ such that 
the induced morphism $\mathcal E_\iota \to \mathcal N$ is zero. 
Consider the induced non-zero morphism $\mathcal E_{\iota+1}/\mathcal E_\iota \to \mathcal N$. 
Since $\mathcal E_{\iota+1}/\mathcal E_\iota$ is $\mu$-semistable, 
we have 
$$
0 \le \mu(\mathcal E')
\le \mu(\mathcal E_{\iota+1}/\mathcal E_\iota)
\le \mu(\mathcal N)
= 0. 
$$
Note that the first inequality follows from the nefness of $\mathcal E$. 
Hence we get $\mu(\mathcal E') =\mu(\mathcal E_{\iota+1}/\mathcal E_\iota)=0$, 
which means that $\iota = n-1$, 
so we find a non-zero morphism $\mathcal E' \to \mathcal N$, 
i.e., $\mathcal N \in \mathbb L(\mathcal E')$.   
Next, we prove that $\mathbb L(\mathcal E')$ is a finite set, 
by the induction on the rank. 
When $\mathrm{rank}(\mathcal E') =1$, a non-zero morphism $\mathcal E'\to \mathcal N$ to an $\mathcal N\in\mathrm{Pic}^\tau(V)$ induces 
the non-zero morphism $\mathcal E'^{\vee\vee} \to \mathcal N$ between line bundles, 
which is an isomorphism, as $\mu(\mathcal E')=\mu(\mathcal N) =0$. 
Thus $\mathbb L(\mathcal E') =\{\mathcal E'^{\vee\vee}\}$. 
We next deal with the case when $\mathrm{rank}(\mathcal E') \ge 2$. 
Take $\mathcal N_1, \mathcal N_2 \in \mathbb L(\mathcal E')$ 
with $\mathcal N_1 \not\cong \mathcal N_2$. 
Let $\varphi_i:\mathcal E'\to \mathcal N_i$ 
%and $\varphi_2:\mathcal E'\to \mathcal N_2$ 
denote the given non-zero morphisms for $i=1,2$.
We show that the morphism 
$\mathrm{Ker}(\varphi_1) \xrightarrow{\alpha} \mathcal N_2$ 
induced by $\varphi_2$ is non-zero. If this holds, 
then $\mathcal N_2 \in \mathbb L(\mathrm{Ker}(\varphi_1))$, 
which means that 
$
\mathbb L(\mathcal E') 
\subseteq \mathbb L(\mathrm{Ker(\varphi_1)}) \cup \{\mathcal N_1\}, 
$
so the assertion follows from the induction hypothesis. 
Suppose that $\alpha=0$. 
Then we obtain a non-zero morphism $\mathrm{Im}(\varphi_1) \to \mathcal N_2$. 
By the $\mu$-semistability of $\mathcal E'$, we have 
$$
0 \le \mu(\mathcal E') 
\le \mu(\mathrm{Im}(\varphi_1))
\le \mu(\mathcal N_2)
=0, 
$$
so $\mu(\mathrm{Im}(\varphi_1)) = \mu(\mathcal N_2)$, 
and hence $(\mathrm{Im}(\varphi_1))^{\vee\vee} \cong \mathcal N_2$. 
However, by an argument similar to the above, 
we get $(\mathrm{Im}(\varphi_1))^{\vee\vee} \cong \mathcal N_1$, 
so $\mathcal N_1 \cong (\mathrm{Im}(\varphi_1))^{\vee\vee} \cong \mathcal N_2$, 
which contradicts the choice of $\mathcal N_1$ and $\mathcal N_2$. 
Thus we conclude that $\alpha\ne0$.
\end{proof}
\begin{proof}[Proof of Theorem~\ref{thm:main}] \setcounter{step}{0}
By Proposition~\ref{prop:alb}, $\alpha:X\to A$ is a flat surjective morphism with strongly $F$-regular closed fibers. 
Let $s$ be the Cartier index of $K_X+\Delta$. Note that $p\nmid s$. 
By \cite[COROLLARY~6.17]{Kle05}, there is a numerically trivial $\mathbb Q$-Cartier divisor $L$ on $A$ such that $K_X+\Delta \sim_{\mathbb Q} \alpha^*L$. 
Let $t$ be the smallest positive integer such that 
$t(K_X+\Delta)$ and $tL$ are Cartier and $t(K_X+\Delta) \sim_{\mathbb Z} t \alpha^* L$. 
Then $s|t$. 
We put
$$
I := \{i \in \mathbb Z_{>0}| \textup{$0\le i < t$ and $s|i$} \}.
$$
\begin{step} \label{step:B}
Let $B$ be an ample Cartier divisor on $X$. 
Replacing $B$ by $lB$ for $l\gg0$, we may assume that the following conditions hold: 
\setlength{\leftmargini}{20pt}
\begin{enumerate} [(b1)]
\item For every closed point $a \in A$, the natural morphism 
$$
(\alpha_*\mathcal O_X(B))\otimes k(a) 
\to H^0(X_a, \mathcal O_{X_a}(B))
$$
is an isomorphism.
This follows from the proof of \cite[Lemma~3.5]{Eji19p} and Grauert's theorem (see \cite[I\hspace{-1pt}I\hspace{-1pt}I, Corollary~12.9]{Har77}). 
\item For each $m\in\mathbb Z_{>0}$ and each $i \in I$, the sheaf 
$
\alpha_*\mathcal O_X(-i(K_X+\Delta) +mB)
$
is locally free. This follows from the ampleness of $B$ and the flatness of $f$. %(note that $f$ is flat).
\item For each $m,n\in\mathbb Z_{>0}$ and every $\alpha$-nef Cartier divisor $N$ on $X$, the multiplication morphism 
$$ 
\alpha_* \mathcal O_X(mB +N) \otimes \alpha_* \mathcal O_X(nB) \to \alpha_* \mathcal O_X((m+n)B +N) 
$$ 
is surjective (by Lemma~\ref{lem:surjective}). 
\item For each $e\in\mathbb Z_{>0}$ and every $f$-nef Cartier divisor $N$, 
the morphism 
\begin{align*}
& \alpha^{(e)}_* \mathcal O_X((1-p^e)(K_X+\Delta) +p^e(B+N)) 
\\ & \xrightarrow{\psi_{(X/A,\Delta)}^{(e)}(B_{A^e}+N_{A^e}) }  
{\alpha_{A^e}}_* \mathcal O_{X_{A^e}}(B_{A^e}+N_{A^e}) 
\cong {F_A^e}^* \alpha_*\mathcal O_X(B+N)
\end{align*}
is surjective, where 
$$
\psi_{(X/A,\Delta)}(B_{A^e}+N_{A^e})
:={\alpha_{A^e}}_* \left(\phi_{(X/A,\Delta)}^{(e)}(B_{A^e}+N_{A^e})\right). 
$$
For the construction of the morphism 
$$
\phi_{(X/A,\Delta)}^{(e)}(B_{A^e}+N_{A^e})
:{F_{X/A}^{(e)}}_* \mathcal O_X((1-p^e)(K_X+\Delta) +p^e(B+N))
\to \mathcal O_{X_{A^e}}(B_{A^e}+N_{A^e}), 
$$ 
see \cite[\S 3]{Pat18} or \cite[\S 3]{Eji19d}.
This condition follows from \cite[Lemma~3.7]{Eji19p}. 
Note that, by Proposition~\ref{prop:alb}, $\alpha$ is a flat morphism whose every closed fiber is strongly $F$-regular. 
\end{enumerate}
\end{step}
\begin{step} \label{step:1}
Let $r>0$ be the rank of $\alpha_*\mathcal O_X(B)$. 
We consider the $r$-th fiber product 
$$ 
Y:= X\times_A \cdots \times_A X 
$$
of $X$ over $A$. 
Since $\alpha$ is flat, so is each projection $\mathrm{pr}_i$, 
and hence we can take the pullback $\mathrm{pr}_i^*D$ 
of every $\mathbb Q$-Weil divisor $D$ on $X$. 
Set $\Gamma:=\sum_{i=1}^r \mathrm{pr}_i^*\Delta$. 
Then we get 
\begin{align} \label{relation:1}
t(K_Y +\Gamma)
= t(K_{Y/A} +\Gamma)
\sim \sum_{i=1}^r \mathrm{pr}_i^* t(K_{X/A} +\Delta) 
\sim \sum_{i=1}^r \mathrm{pr}_i^* t\alpha^* L
= rt f^* L. 
\end{align}
Here, $f:Y\to A$ is the natural projection. 
%Hence, it is enough to show that $K_Y +\Gamma$ is $\mathbb Q$-linearly trivial. 
%
Since $(X_a,\Delta|_{X_a})$ is strongly $F$-regular for all closed fibers $X_a$, 
the pair $(Y_a, \Gamma|_{Y_a})$ is also strongly $F$-regular by Lemma~\ref{lem:product_SFR}. 
%and so $(Y, \Gamma)$ is strongly $F$-regular, as $A$ is smooth and $f$ is flat. 
Put 
$
B':= \sum_{i=1}^r \mathrm{pr}_i^* B. 
$ 
Since $\alpha:X\to A$ is flat, we have 
$$
f_*\mathcal O_Y(B') 
= f_* \left( \bigotimes_{i=1}^r \mathrm{pr}_i^* \mathcal O_X(B) \right)
\cong \bigotimes^r \alpha_* \mathcal O_X(B). 
$$
Since $r$ is the rank of $\alpha_*\mathcal O_X(B)$, we get the morphism
$$
\varphi:
\det \left( \alpha_*\mathcal O_X(B) \right)
\to
\bigotimes_{i=1}^r \alpha_* \mathcal O_X(B) 
$$
that is locally defined as 
$$
x_1 \wedge \cdots \wedge x_r 
\mapsto 
\sum_{\sigma\in \mathfrak S_r} 
\mathrm{sign}(\sigma) \cdot x_{\sigma(1)} \otimes \cdots \otimes x_{\sigma(r)}, 
$$
where $\mathfrak S_r$ is the symmetric group of degree $r$. 
Put $\mathcal H := \det (\alpha_*\mathcal O_X(B))$. 
Let $H$ be a Cartier divisor on $A$ 
with $\mathcal O_A(H) \cong \mathcal H$. 
Then $\varphi$ induces the morphism 
$$
\mathcal O_A
\to 
\mathcal H^{-1} \otimes \left( \bigotimes_{j=1}^r \alpha_* \mathcal O_X(B) \right)
\cong \mathcal H^{-1} \otimes f_*\mathcal O_Y(B') 
\cong f_*\mathcal O_Y(B' -f^*H). 
$$
Therefore, there is an effective $f$-ample Cartier divisor 
$C\sim B' -f^*H$ on $Y$. 
%Since $\varphi$ splits locally, we see that $\mathrm{Supp}\,C$ does not contain any fiber of $f$. 
\end{step}
\begin{step} \label{step:fiber}
We show that $\mathrm{Supp}\,C$ does not contain any fiber of $f$. 
Fix a fiber $Y_a$ of $f$. 
Note that $Y_a$ is irreducible, since so is $X_a$. 
We consider the composite $c$ of 
$$ 
c: f^*\mathcal H 
\to \mathcal O_Y(B')
\twoheadrightarrow \mathcal O_{Y_a}(B'), 
$$ 
where the first (resp. second) morphism is the adjoint of 
$\varphi:\mathcal H\to f_*\mathcal O_Y(B')$
(resp. the natural morphism).  
Suppose that $Y_a \subseteq \mathrm{Supp}\,C$. 
Then $c$ is the zero-map. 
Let $V\subseteq A$ be a neighborhood of $a$
such that $(\alpha_*\mathcal O_X(B))|_V$ is free, 
and let $x_1,\ldots, x_r$ be a basis of that. 
Then 
$$
0=c\left( f^*x_1 \wedge \cdots \wedge f^*x_r \right) 
=\sum_{\sigma\in\mathfrak S_r} 
\mathrm{sign}(\sigma) \cdot 
\mathrm{pr}_1^*(x_{\sigma(1)}|_{X_a})
\otimes \cdots \otimes 
\mathrm{pr}_r^*(x_{\sigma(r)}|_{X_a}), 
$$
which means that 
$$
\{ \mathrm{pr}_1^*(x_{\sigma(1)}|_{X_a})
\otimes \cdots \otimes 
\mathrm{pr}_r^*(x_{\sigma(r)}|_{X_a}) \}_{\sigma\in\mathfrak S_r}
\subset
H^0(Y_a, \mathcal O_{Y_a}(B'))
\cong \bigotimes_{i=1}^r H^0(X_a, \mathcal O_{X_a}(B))
$$
is linearly dependent, and so 
$$
\{x_1|_{X_a},\ldots,x_r|_{X_a} \} \subset H^0(X_a, \mathcal O_{X_a}(B))
$$
is also linearly dependent. 
This contradicts that the natural morphism 
$$
( \alpha_*\mathcal O_X(B) )|_V \otimes k(a) 
\to H^0(X_a, \mathcal O_{X_a}(B))
$$
is an isomorphism by (b1). 
\end{step}
\begin{step} \label{step:det}
%We prove that $f_*\mathcal O_Y(C)$ is a numerically flat vector bundle on $A$. 
%
%Note that $f_*\mathcal O_Y(C)$ is locally free by the construction. 
%We show that $\det(f_*\mathcal O_Y(C)) \cong \mathcal O_A$ 
%and $f_*\mathcal O_Y(C)$ is nef. 
%
We prove that $\det(f_*\mathcal O_Y(C)) \cong \mathcal O_A$. 
This follows from the following calculation: 
\begin{align*}
\det ( f_*\mathcal O_Y(C) )
& \cong \det ( f_*\mathcal O_Y(B'-f^*H) )
\\ & \cong \det \left( \mathcal H^{-1} \otimes f_*\mathcal O_Y(B') \right)
\\ & \cong \det \left( \mathcal H^{-1} \otimes \left( \bigotimes^r \alpha_*\mathcal O_Y(B) \right) \right)
\\ & \cong \mathcal H^{-r^r} \otimes \det \left( \bigotimes^r \alpha_*\mathcal O_Y(B) \right)
= \mathcal H^{-r^r} \otimes \mathcal H^{r^r} \cong \mathcal O_A. 
\end{align*}
\end{step}
\begin{step} \label{step:nef}
We show that 
$
f_*\mathcal O_Y(mC)
$
is a nef vector bundle for each $m\in\mathbb Z_{>0}$ sufficiently large and divisible by using \cite[Theorem~6.10~(2)]{Eji19d}.  
Take $l_0\in\mathbb Z_{>0}$ so that $(Y_a, (\Gamma +l^{-1}C)|_{Y_a})$ is strongly $F$-regular for each $l\ge l_0$ and every closed fiber $Y_a$ of $f$. 
Note that we can find such an $l_0$ thanks to \cite[Theorem~B]{PSZ18}. %\cite[Corollary~4.21]{PSZ13}. 
Since $K_Y+\Gamma+l^{-1}C$ is $f$-ample by the construction and $K_{Y/A}=K_Y$, 
we see from \cite[Theorem~6.10~(2)]{Eji19d} that the sheaf 
$$
f_* \mathcal O_Y(lm (K_Y +\Gamma +l^{-1}C))
= f_* \mathcal O_Y(lm (K_Y +\Gamma) +mC)
$$
is weakly positive over $A$ for each $m$ sufficiently large and divisible. 
%Let $m$ be divided by $t$ and fix such an $m$. 
Fix such an $m$. 
Then 
\begin{align*}
f_* \mathcal O_Y(lm (K_Y +\Gamma) +mC)
\overset{\textup{by (1)}}{\cong} f_* \mathcal O_Y(lmr f^*L +mC)
\cong \mathcal O_A(lmrL) \otimes f_* \mathcal O_Y(mC).
\end{align*}
By (b2), the sheaf $f_* \mathcal O_Y(mC)$ is a vector bundle. 
Since $L$ is numerically trivial, 
we see that $f_*\mathcal O_Y(mC)$ is weakly positive over $A$, 
so it is a nef vector bundle (cf. \cite[Definition~und~Lemma~1.10]{Vie82}).  
\end{step}
\begin{step} \label{step:nef2}
We prove that 
$
f_*\mathcal O_Y(-i(K_Y+\Gamma) +C)
$
is a nef vector bundle for each $i\in I$, 
using Lemma~\ref{lem:nef}. 
For each $a \in \mathbb Z_{>0}$, we have the multiplication morphism 
$$
\mu_a:\bigotimes^a \alpha_*\mathcal O_X(mB) 
\to \alpha_*\mathcal O_X(amB),
$$
which is surjective by (b3). 
Take $e\in\mathbb Z_{>0}$ so that $(1-p^e)(K_Y+\Gamma)$ is Cartier (i.e., $s|(p^e-1)$).
Let $a, b$ be integers such that $p^e = am+b$ and $0\le b < m$. 
Fix $i \in I$. 
We consider the following sequence of morphisms:
\begin{align*}
& f_* \mathcal O_Y((1-p^e-ip^e)(K_Y+\Gamma) +b C) \otimes \left(\bigotimes^a f_*\mathcal O_Y(mC)\right)
\\ \cong &
\mathcal H^{-p^e} \otimes f_* \mathcal O_Y((1-p^e-ip^e)(K_Y+\Gamma) +b B') \otimes \left(\bigotimes^a f_*\mathcal O_Y(mB')\right)
\\ \cong & 
\mathcal H^{-p^e} \otimes \left( \bigotimes^r \left( \alpha_* \mathcal O_X((1-p^e-ip^e)(K_X+\Delta) +bB) \otimes \left( \bigotimes^{a} \alpha_*\mathcal O_X(mB) \right) \right) \right)
%\\ \xrightarrow{\mu'_1} &
\\ \overset{\mu'_1}{\twoheadrightarrow} &
\mathcal H^{-p^e} \otimes \left( \bigotimes^r \left( \alpha_* \mathcal O_X((1-p^e-ip^e)(K_X+\Delta) +bB) \otimes \alpha_*\mathcal O_X(amB) \right) \right)
%\\ \xrightarrow{\mu'_2} & 
\\ \overset{\mu'_2}{\twoheadrightarrow} &
\mathcal H^{-p^e} \otimes \left( \bigotimes^{r} \left( \alpha_* \mathcal O_X \left((1-p^e-ip^e)(K_X+\Delta) +p^eB \right) \right) \right)
\\ \cong & 
\mathcal H^{-p^e} \otimes \left( f_* \mathcal O_Y \left((1-p^e-ip^e)(K_Y+\Gamma) +p^eB' \right) \right)
\\ \cong & 
f_* \mathcal O_Y \left((1-p^e-ip^e)(K_Y+\Gamma) +p^eC \right). 
\end{align*}
Here, $\mu_1'$ is induced from $\mu_a$, so $\mu_1'$ is surjective. 
Also, $\mu'_2$ is induced from the multiplication map, 
so it is surjective by (b3), 
where note that $-(K_X+\Delta)$ is nef. 
By (b4), we get the surjection
\begin{align} \label{mor:4}
f_* \mathcal O_Y \left((1-p^e-ip^e)(K_Y+\Gamma) +p^eC \right) 
\twoheadrightarrow 
{F_A^e}^* f_* \mathcal O_Y(-i(K_Y+\Gamma) +C). 
\end{align}
Combining this with the above sequence of morphisms, we obtain the surjection
\begin{align} \label{mor:10}
f_* \mathcal O_Y((1-p^e-ip^e)(K_Y+\Gamma) +b C) 
\otimes \left(\bigotimes^a f_*\mathcal O_Y(mC)\right)
\\ \twoheadrightarrow 
{F_A^e}^* f_* \mathcal O_Y(-i(K_Y+\Gamma) +C). 
\end{align}
For each $i\in I$, let $c_i, d_i$ be integers such that 
$ip^e +p^e-1 = c_it +d_i$ and $0\le d_i < t$. 
We may assume that $e\gg0$ so that $c_i>0$ for each $i\in I$. 
Then 
$$
f_*\mathcal O_Y((1-p^e-ip^e)(K_Y +\Gamma) +bC)
\cong
\mathcal O_A(-c_irtL) \otimes f_*\mathcal O_Y(-d_i(K_Y +\Gamma) +bC). 
$$
Note that $s|d_0$, since $s|(p^e-1)$ and $s|t$. 
Put 
$$
\mathcal G := \bigoplus_{\substack{0\le b < m \\ 0 \le d < t, ~s|d}} 
f_*\mathcal O_Y(-d(K_Y+\Gamma) +bC). 
$$ 
Let $\mathcal K$ be an ample line bundle such that 
$\mathcal G\otimes \mathcal K$ is globally generated.
We have the following sequence of surjections:
\begin{align*}
& \mathcal O_A(-c_irt L) 
\otimes \left(H^0(A, \mathcal G \otimes \mathcal K) \otimes_k \mathcal O_A\right)
\otimes \left(\bigotimes^a f_*\mathcal O_Y(mC) \right)
\\ \twoheadrightarrow & \mathcal O_A(-c_irt L) 
\otimes \mathcal G \otimes \mathcal K
\otimes \left(\bigotimes^a f_*\mathcal O_Y(mC) \right)
\\ \twoheadrightarrow & \mathcal K \otimes \mathcal O_A(-c_irt L) \otimes f_* \mathcal O_Y(-d_i(K_Y+\Gamma) +bC) 
\otimes \left(\bigotimes^a f_*\mathcal O_Y(mC) \right)
\\ \cong & \mathcal K \otimes f_* \mathcal O_Y((1-p^e-ip^e)(K_Y+\Gamma) +bC) 
\otimes \left(\bigotimes^a f_*\mathcal O_Y(mC) \right)
\\ \twoheadrightarrow & \mathcal K \otimes {F_A^e}^*f_*\mathcal O_Y(-i(K_Y+\Gamma) +C)
\end{align*}
Here, the second (resp. fourth) morphism comes from 
the definition of $\mathcal G$ (resp. morphism~(\ref{mor:10})). 
Since $f_*\mathcal O_Y(mC)$ is a nef vector bundle, 
the source of the composite of the above morphisms is nef, 
so the target 
$
\mathcal K \otimes {F_A^e}^* f_*\mathcal O_Y(-i(K_Y+\Gamma)+C)
$ 
is also nef as the composite is surjective.
%Thus, we see that $\mathcal K \otimes {F_A^e}^* f_*\mathcal O_Y(-i(K_Y+\Gamma)+C)$ is a quotient of a nef vector bundle, so it is a nef vector bundle. 
Then Lemma~\ref{lem:nef} implies that $f_*\mathcal O_Y(-i(K_Y+\Gamma)+C)$ is nef.
\end{step}
\begin{step} \label{step:homog}
We prove the assertion. 
By Step~\ref{step:nef}, the sheaf 
$$
\mathcal F := \bigoplus_{i\in I} f_*\mathcal O_Y(-i(K_Y +\Gamma) +C)
$$
is a nef vector bundle, 
so we see from Lemma~\ref{lem:HN} that $\mathbb L(\mathcal F)$ is a finite set. 
Pick $\mathcal N \in \mathbb L(\mathcal F)$. 
Then there is a non-zero morphism $f_* \mathcal O_Y(-i(K_Y+\Gamma) +C) \to \mathcal N$ for some $i \in I$. 
We consider the following morphisms that are generically surjective:
\begin{align} \label{mor:5}
\mathcal O_A(-c_irtL) \otimes \mathcal F \otimes \left(\bigotimes^{p^e-1}f_*\mathcal O_Y(C) \right)
\overset{\gamma}{\twoheadrightarrow}
{F^e_Y}^*f_*\mathcal O_Y(-i(K_Y+\Gamma) +C) 
\to \mathcal N^{p^e}. 
\end{align}
Here, $\gamma$ is constructed by the same construction as that in Step~\ref{step:nef2}. 
Let $\mathcal E$ denote $f_*\mathcal O_Y(C)$. 
In Step~\ref{step:det} (resp. Step~\ref{step:nef2}), 
we proved that $\det (\mathcal E) =\mathcal O_A$ 
(resp. $\mathcal E$ is nef). 
Hence we see that $\mathcal E$ is numerically flat, 
so Theorem~\ref{thm:homog} tells us that  
there are algebraically trivial line bundles $\mathcal N_{1}, \ldots, \mathcal N_{l}$ and 
unipotent vector bundles $\mathcal U_{1}, \ldots, \mathcal U_{l}$ such that 
\begin{align} \label{cong:Miyanishi}
\mathcal E \cong \bigoplus_{j=1}^{l} \mathcal U_{j} \otimes \mathcal N_{j}. 
\end{align}
Then one can easily check that 
\begin{itemize}
\item[\rm (i)] $\mathbb L(\mathcal E) = \{\mathcal N_1, \ldots, \mathcal N_l\}$, and 
\item[\rm (i\hspace{-1pt}i)] 
there is a filtration
$$
0 =\mathcal E_0 
\subset \mathcal E_1 
\subset \cdots 
\subset \mathcal E_{v-1}
\subset \mathcal E_v 
= \mathcal E
$$
such that $\mathcal E_{j+1}/\mathcal E_j \in \mathbb L(\mathcal E)$ 
for each $j=0,1,\ldots, v-1$. 
%$f_*\mathcal O_Y(C)$ has a filtration whose each quotient is isomorphic to $\mathcal N_j$ for some $j$.
\end{itemize}
%
%Then by (i\hspace{-1pt}i), 
Hence, from (i\hspace{-1pt}i) and morphism~(\ref{mor:5}), 
we obtain the non-zero morphism 
\begin{align} \label{mor:6}
\mathcal O_A(-c_irtL) \otimes \mathcal F \otimes \mathcal P
\to
\mathcal N^{p^e}, 
\end{align}
where $\mathcal P$ is an element of the set $\mathbb M^{(p^e-1)}$ 
that is defined as follows:
for each $\mu\in \mathbb Z_{>0}$, we define $\mathbb M^{(\mu)}$ by 
$$
\mathbb M^{(\mu)} := \left\{ \bigotimes_{j=1}^\lambda \mathcal M_j^{n_j} \middle| \textup{$\mathcal M_j \in \mathbb L(\mathcal F)$ and $0\le n_j \in \mathbb Z_{}$ with $\sum_{j=1}^\lambda n_j =\mu$} \right\}. 
$$
Note that $\mathbb L(\mathcal E) \subseteq \mathbb L(\mathcal F)$. 
From morphism~(\ref{mor:6}), we obtain that 
$$
\mathcal Q := 
\mathcal O_A(c_irtL) \otimes \mathcal N^{p^e} \otimes \mathcal P^{-1} 
\in \mathbb L(\mathcal F),
$$ 
which means that 
$$
\mathcal N^{p^e} \otimes \mathcal O_A(c_irtL) 
\cong \mathcal P \otimes \mathcal Q
\in \mathbb M^{(p^e)}. 
$$
Thus, we see that for each $\mathcal N \in \mathbb L(\mathcal F)$, 
there is $i \in I$ such that 
$$
\mathcal N^{p^e} \otimes \mathcal O_A(c_irtL) \in \mathbb M^{(p^e)}.
$$ 
Set $\lambda:= |\mathbb L(\mathcal F)|$ and $\mu := (p^e-1)\lambda +1$. 
Take $\mathcal M \in \mathbb M^{(\mu)}$. 
Then, we obtain from the pigeonhole principle that 
there is $\mathcal N \in \mathbb L(\mathcal F)$ with
$\mathcal M \otimes \mathcal N^{-p^e} \in \mathbb M^{(\mu-p^e)}$, 
and so
$$
\mathcal M \otimes \mathcal O_A(c_irtL)
\cong
\underbrace{\mathcal M 
\otimes \mathcal N^{-p^e}}_{\in \mathbb M^{(\mu-p^e)}}
\otimes \underbrace{\mathcal N^{p^e}
\otimes \mathcal O_A(c_irtL)}_{\in \mathbb M^{(p^e)}}
\in \mathbb M^{(\mu)} 
$$
for some $c_i$. 
We replace $\mathcal M$ with $\mathcal M \otimes \mathcal O_A(c_irtL)$ 
and repeat the same argument as above, 
then we find an $i_2 \in I$ such that 
$$
\mathcal M \otimes \mathcal O_A((c_i +c_{i_2})rtL) \in \mathbb M^{(\mu)}.
$$
Recall that $c_i>0$ for each $i\in I$ by the choice of $e$.  
Repeating this argument again and again, we get that the set
$$
\mathcal C:=\left\{ c \in \mathbb Z_{>0} 
\middle|  \mathcal M \otimes \mathcal O_A(crtL) \in \mathbb M^{(\mu)} \right\} 
$$
is an infinite set. 
If $L \not\sim_{\mathbb Q} 0$, then 
$
\left\{\mathcal O_Z(crtL) | c\in\mathcal C \right\}
$ 
is an infinite set, so
$$
\left\{\mathcal M \otimes \mathcal O_A(crtL) | c\in\mathcal C\right\} 
$$
is also an infinite set, but this set is contained in $\mathbb M^{(\mu)}$, 
which contradicts the finiteness of $\mathbb M^{(\mu)}$. 
%Since $\mathbb M^{(\mu)}$ is a finite set, there is $c, c' \in \mathcal C$ with $c<c'$ such that $\mathcal M \otimes \mathcal O_A(crtL) \cong \mathcal M \otimes \mathcal O_A(c'rtL)$, which means that $\mathcal O_A((c'-c)rtL) \cong \mathcal O_A$.  
Thus, we conclude that $L \sim_{\mathbb Q} 0$.  
\end{step}
\end{proof}
\bibliographystyle{abbrv}
\bibliography{ref}

\end{document}